\newtheorem{theorem}{Theorem}
\newtheorem{conjecture}[theorem]{Conjecture}
\newtheorem{lemma}[theorem]{Lemma}
\theoremstyle{definition}
\theoremstyle{remark}
\theoremstyle{remark}
\numberwithin{theorem}{section}
\newcommand{\E}{\textsf{\upshape E}}
\newcommand{\prob}{\textsf{\upshape Pr}}
\newcommand{\Gnp}{\mathcal{G}(n,p)}
\begin{document}

\title{Linear Colouring of Binomial Random Graphs}

\author{
Austin Eide\thanks{Department of Mathematics, Toronto Metropolitan University, Toronto, ON, Canada; e-mail: \texttt{austin.eide@torontomu.ca}}, 
Pawe\l{} Pra\l{}at\thanks{Department of Mathematics, Toronto Metropolitan University, Toronto, ON, Canada; e-mail: \texttt{pralat@torontomu.ca}}
}

\maketitle

\begin{abstract}
We investigate the linear chromatic number $\chi_{\text{lin}}(\Gnp)$ of the binomial random graph $\Gnp$ on $n$ vertices in which each edge appears independently with probability $p=p(n)$. For dense random graphs ($np \to \infty$ as $n \to \infty$), we show that asymptotically almost surely $\chi_{\text{lin}}(\Gnp) \ge n (1 - O( (np)^{-1/2} ) ) = n(1-o(1))$. Understanding the order of the linear chromatic number for subcritical random graphs ($np < 1$) and critical ones ($np=1$) is relatively easy. However,  supercritical sparse random graphs ($np = c$ for some constant $c > 1$) remain to be investigated. 
\end{abstract}

\section{Introduction} 

Let $G = (V,E)$ be a graph and let $\phi: V \to \{1,\dots,k\}$ be an assignment of $k$ colours to the vertices of $G$. We say that $\phi$ is a \textit{proper} $k$-\textit{colouring} if for each $\{v,w\} \in E$, $\phi(v) \neq \phi(w)$. The \textit{chromatic number} of $G$, denoted $\chi(G)$, is the smallest positive integer $k$ such that a proper $k$-colouring of $G$ exists.

\medskip

Given a colouring $\phi$ and subset $S \subseteq V$, we say that a vertex $v \in S$ is a \textit{centre} for $S$ if $\phi(v)$ is distinct from $\phi(w)$ for all $w \neq v$ in $S$. A \textit{centred} $k$-\textit{colouring} of $G$ is a $k$-colouring of $G$ such that for every connected subgraph $H \subseteq G$, $V(H)$ has a centre. The \textit{centred chromatic number} $\chi_{\text{cen}}(G)$ is the smallest $k$ such that a centred $k$-colouring of $G$ exists. Observe that a centred colouring is necessarily proper, since each edge $\{v,w\} \in E$ comprises a connected subgraph of $G$. Hence we have the inequality $\chi(G) \leq \chi_{\text{cen}}(G)$.

The centred chromatic number is an important and natural graph parameter that has been introduced under numerous names in the literature: rank function~\cite{nevsetvril2003order}, vertex ranking number (or ordered colouring)~\cite{deogun1994vertex}, weak colouring number~\cite{kierstead2003orderings}. Its study was systematically undertaken by Ne\v{s}et\v{r}il and Ossona de Mendez under the name of \emph{tree-depth}~\cite{nevsetvril2006tree}. The notion of tree-depth is related to the one of tree-width. The tree-width of a graph can be seen as a measure of closeness to a tree, while the tree-depth takes also into account the diameter of the tree. Both serve as important measures of sparsity of a graph~\cite{nevsetvril2012sparsity,nevsetvril2015low}.

\medskip

A \textit{linear} $k$-\textit{colouring} of $G$ is a $k$-colouring such that every subgraph of $G$ that is a path has a centre. The corresponding \textit{linear chromatic number} $\chi_{\text{lin}}(G)$ is defined in the obvious way. A linear colouring is necessarily proper, as each edge $\{v,w\} \in E$ is a path of length one. On the other hand, a centred colouring is necessarily linear, since path subgraphs are connected. Therefore, we have $\chi(G) \leq \chi_{\text{lin}}(G) \leq \chi_{\text{cen}}(G)$. (In other works, e.g., \cite{yuster1998linear}, the term \textit{linear colouring} has been used to refer to proper colourings with the property that the subgraph induced any pair of colour classes is a disjoint union of paths. This is distinct from the meaning here.)

The linear chromatic number was introduced by Kun, O'Brien, Pilipczuk, and Sulivan~\cite{kun2021polynomial} who were motivated by finding efficiently-computable approximations of tree-depth in the class of bounded expansion graphs. The authors of~\cite{kun2021polynomial} provide a family of graphs that contains, for every $\epsilon > 0$, a graph $G$ with $\chi_{\text{cen}}(G) > (2-\epsilon) \chi_{\text{lin}}(G)$ and based on that they stated the following, quite bold, conjecture:
\begin{conjecture}[\cite{kun2021polynomial}]\label{conjecture}
For all graphs $G$, $\chi_{\text{cen}}(G) \leq 2\chi_{\text{lin}}(G)$. 
\end{conjecture}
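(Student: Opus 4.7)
The plan is to exhibit, given any linear $k$-colouring $\phi: V \to \{1,\dots,k\}$ of $G$, a centred colouring using at most $2k$ colours --- equivalently, an elimination tree of $G$ of depth at most $2k$ (recalling that the centred chromatic number equals the tree-depth, i.e., the minimum depth of a rooted forest on $V(G)$ in which the endpoints of every edge are in an ancestor--descendant relation). A natural recursive construction suggests itself: for each connected subgraph $H$ encountered in the recursion, starting from the components of $G$, I would select an extremal path $P \subseteq H$ --- for instance a longest one, or one maximising some function of the $\phi$-colours it sees --- invoke the linear colouring property to obtain a centre $v \in V(P)$ whose $\phi$-colour is unique along $P$, make $v$ the root of the current rooted subtree, and recurse on the components of $H \setminus \{v\}$.

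To control the depth of the resulting forest, I would introduce a potential $\Phi(H,\phi)$ on the subgraphs encountered during the recursion, satisfying $\Phi(G,\phi) \leq 2k$ initially and $\Phi(H \setminus \{v\},\phi) < \Phi(H,\phi)$ at each recursive step. The hoped-for picture is a dichotomy: at every step, either the chosen centre $v$ certifies that some colour of $\phi$ can be declared \emph{fully processed} inside the current subgraph (contributing up to $k$ levels to the depth budget), or else some structural invariant of the path system of $H$ --- say the length of the longest path that still visits a particular colour twice --- strictly shrinks (contributing the remaining $k$ levels). Careful bookkeeping of these two effects would, if the details cooperate, yield the factor of two in the conjectured bound.

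The main obstacle, and the reason Conjecture~\ref{conjecture} has remained wide open, is that deleting a centre $v$ from $H$ neither necessarily reduces the number of $\phi$-colours appearing in $H$ (the colour $\phi(v)$ may be carried by many other vertices elsewhere), nor necessarily decreases the linear chromatic number of $H \setminus \{v\}$; so a straightforward induction on either invariant cannot close the argument. One is therefore forced to design a potential that consults the \emph{global} path structure of $H$ while still being guaranteed to drop at every recursive step --- a delicate balancing act, as witnessed by the family of~\cite{kun2021polynomial} pushing the ratio arbitrarily close to $2$ from below, and by the fact that existing general bounds of the form $\chi_{\text{cen}}(G) \leq f(\chi_{\text{lin}}(G))$ achieve only polynomial $f$ of degree significantly larger than one. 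I expect that any successful attack will have to combine the local recursion above with a more global structural result about how $\phi$-colour classes can intersect the path system of $G$.
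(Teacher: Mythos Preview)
This statement is a \emph{conjecture}, not a theorem: the paper does not prove it, and indeed explicitly says ``We are far away from proving this conjecture.'' The paper's contribution is evidence \emph{in support} of the conjecture, namely that it holds a.a.s.\ for dense binomial random graphs (via Theorem~\ref{main_theorem}), not a proof of the conjecture itself. So there is no ``paper's own proof'' to compare against.

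Your proposal is likewise not a proof, and to your credit you are candid about this: you outline a natural recursive elimination-tree scheme, identify the need for a potential function $\Phi$ that drops at each step, and then correctly isolate the obstruction --- removing a centre $v$ need not reduce the set of $\phi$-colours present, nor the linear chromatic number of the remainder, so no obvious potential works. This is an accurate diagnosis of why the problem is open, but it is a discussion of difficulties rather than a proof attempt with a gap to be named. If the assignment was to prove the stated result, the honest answer is that no proof is currently known; if it was to summarise the state of play, your write-up does that well.
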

We are far away from proving this conjecture. The only class of graphs for which centred chromatic number is known to be bounded by a linear function of linear chromatic number is the class of bounded degree trees~\cite[Theorem~4]{kun2021polynomial}. Currently the best upper bound is proved by Bose, Dujmovi{\'c}, Houdrouge, Javarsineh, and Morin~\cite{bose2022linear} who were able to prove that 
$$
\chi_{\text{cen}}(G) \leq \chi_{\text{lin}}(G)^{10} \Big( \log (\chi_{\text{lin}}(G)) \Big)^{O(1)}.
$$
This result improved the bound proved by Czerwi\'nski, Nadara, and Pilipczuk~\cite{czerwinski2021improved} (they reduced the exponent from 190 to 19) which, in turn, improved the original bound by Kun at al.~\cite{kun2021polynomial} (with exponent 190). Bose at al.~\cite{bose2022linear} provide further evidence in support of the conjecture by establishing that, if $G$ is a $k \times k$ pseudo-grid, then $\chi_{\text{cen}}(G) = O (\chi_{\text{lin}}(G))$. 

\medskip

In this paper, we investigate the binomial random graph $\Gnp$ that is formally defined as a distribution over the class of graphs with the set of nodes $[n]:=\{1,\ldots,n\}$ in which every pair $\{i,j\} \in \binom{[n]}{2}$ appears independently as an edge in $G$ with probability~$p$. Note that $p=p(n)$ may (and usually does) tend to zero as $n$ tends to infinity. Most results in this area are asymptotic by nature. We say that $\Gnp$ has some property \emph{asymptotically almost surely} (or \emph{a.a.s.}) if the probability that $\Gnp$ has this property tends to $1$ as $n$ goes to infinity. For more about this model see, for example,~\cite{Bollobas,JLR,frieze2016introduction}.

\medskip

The binomial random graph $\Gnp$ is notoriously a good candidate for constructing counterexamples to conjectures that seem to be false, including the seminal result of Erd\H{o}s from 1959~\cite{erdos1959graph} that ``many consider [to be] one of the most pleasing uses of the probabilistic method, as the result is surprising and does not appear to call for nonconstructive techniques'' (see~\cite{alon2016probabilistic}). The \emph{girth} of a graph is the size of its shortest cycle. Erd\H{o}s showed in~\cite{erdos1959graph} that for any $k$ and $\ell$ there exists a graph $G$ with girth more than $\ell$ and $\chi(G) > k$. 

Alternatively, one can investigate random graphs to support various conjectures that seem to be true. In particular, $\Gnp$ with $p=1/2$ yields a uniform distribution of (labeled) graphs on $n$ vertices, so showing that a given conjecture holds a.a.s.\ for $\mathcal{G}(n,1/2)$ is equivalent to proving that almost all graphs satisfy the conjecture. Many open problems are supported by such statements including the following, clearly biased, small sample of results of this flavour: Meyniel's conjecture~\cite{pralat2016meyniel,pralat2019meyniel}, Tutte's conjecture~\cite{pralat2020almost}, and Jaeger's conjecture~\cite{delcourt20239regular}. 

\medskip

The results presented in this paper for dense binomial random graphs $\Gnp$ (that is, in the regime when $np \to \infty$) support Conjecture~\ref{conjecture}. Our main theorem is the following. 

\begin{theorem}\label{main_theorem} 
Let $\omega = \omega(n) \le n$ be any function that tends to infinity as $n \to \infty$, and let $p = \omega / n$. Then, the following holds a.a.s.: 
$$
\chi_{\text{lin}}(\Gnp) \geq n - \frac{510n}{\sqrt{\omega}}.
$$
\end{theorem}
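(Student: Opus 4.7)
The plan is to argue by contradiction: suppose $\phi$ is a linear colouring of $G \sim \Gnp$ using at most $n - u$ colours, where $u := 510 n/\sqrt{\omega}$. Let $U \subseteq V$ be the set of vertices whose colour is shared with at least one other vertex, and let $C_1, \ldots, C_t$ be the corresponding colour classes with $c_i := |C_i| \ge 2$ and $s := |U| = \sum_i c_i$; then $s - t = n - k \ge u$. The goal is to exhibit a bad path (one on which every colour appears at least twice, so with no centre), contradicting linearity, with probability tending to $1$.

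The structural observation that powers the proof is that for any two classes $C_i, C_j$, the bipartite subgraph $G[C_i, C_j]$ cannot contain a $P_4$ (a path on four vertices) as a subgraph: any such $P_4$ has the alternating colour pattern $ABAB$ with $A = \phi(C_i)$ and $B = \phi(C_j)$, and on it both colours appear exactly twice, leaving no centre. Consequently each $G[C_i, C_j]$ is a disjoint union of stars and contributes at most $c_i + c_j - 2$ edges.

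The main quantitative step is a first-moment (and second-moment) calculation for the number $X$ of bad $P_4$-subgraphs of $G$ compatible with $\phi$. Writing $T := \sum_i c_i(c_i-1)$ and using $c_i(c_i-1) \ge c_i$ for $c_i \ge 2$, we have $T \ge s \ge u$ and
$$
\mathbb{E}[X] \;=\; p^3 \sum_{i<j} c_i(c_i-1)\,c_j(c_j-1) \;\ge\; \tfrac{1}{2} p^3 \Bigl(T^2 - \sum_i [c_i(c_i-1)]^2\Bigr).
$$
For partitions in which no single class dominates $T$, this is at least $c\, p^3 u^2 \asymp \omega^2/n$; it tends to infinity in the regime $\omega \gg \sqrt{n}$, and a standard second-moment argument (controlling the variance through the overlap structure of $P_4$'s) combined with a union bound over the choice of $U$ and its partition then forces a bad $P_4$ to appear a.a.s., contradicting the structural obstruction.

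The hard part will be extending the argument to cover all $\omega \to \infty$, in particular when $\omega \le \sqrt{n}$ and the $P_4$ first moment is too small on its own. There I would supplement the $P_4$ analysis with longer block-repeating bad paths $C_{i_1} C_{i_2} \cdots C_{i_k} C_{i_1} \cdots C_{i_k}$, whose expected count scales like $(tp^2)^k/p$ for doubletons (and grows with $k$ once $tp^2 \gtrsim 1$), and with a case distinction on $c_{\max}$ using the a.a.s.\ bound $\alpha(\Gnp) = O((n/\omega)\log\omega)$ to control how large a single class can be: either $c_{\max}$ is so small that many classes are forced and the bipartite $P_4$-moment becomes effective, or $c_{\max}$ is large enough that patterns of the form $ABABA$ exploiting the big class supply the contradiction. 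The specific constant $510$ then arises from optimizing these estimates while absorbing the union-bound contributions.
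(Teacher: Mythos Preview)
Your structural observation that a linear colouring forbids $ABAB$-patterned $P_4$'s between any two colour classes is correct, but the proof strategy built on it has a genuine gap and cannot be completed along the lines you describe.

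The central difficulty is that the colouring $\phi$ is chosen \emph{after} the graph $G$ is revealed, so to conclude that a.a.s.\ no linear colouring with $\le n-u$ colours exists, you must rule out \emph{every} admissible partition simultaneously. A second-moment (Chebyshev) bound on the number $X$ of bad $P_4$'s for a \emph{fixed} partition gives at best a polynomially small failure probability, and even Janson's inequality yields only $\Pr[X=0]\le\exp\bigl(-O(pT)\bigr)=\exp\bigl(-O(\sqrt{\omega})\bigr)$. The union bound over partitions into parts of size $\ge 2$ of a set $U$ with $|U|\ge u=510n/\sqrt{\omega}$ contributes at least $\binom{n}{u}\ge\exp\bigl(\Omega(u\log\omega)\bigr)=\exp\bigl(\Omega(n\log\omega/\sqrt{\omega})\bigr)$ terms, which swamps any such estimate.

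Worse, for sparse $\omega$ the first moment itself is too small for \emph{any} fixed partition. If the adversary uses only doubletons (which minimises $T=\sum_i c_i(c_i-1)$), then $t\ge u$ and the expected number of bad $P_4$'s is $\Theta(t^2p^3)=\Theta(\omega^2/n)$, which is $o(1)$ once $\omega=o(\sqrt{n})$. Your longer block-repeating patterns have expected count $\Theta\bigl((tp^2)^k/p\bigr)$ with $tp^2=\Theta(\omega^{3/2}/n)$, so for $\omega=o(n^{2/3})$ this is a decreasing function of $k$ and still $o(1)$; and the case analysis on $c_{\max}$ does not help, because the adversary is free to avoid large classes entirely. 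In fact the paper's own sketch (Section~\ref{sec:sketch}) anticipates exactly this: it notes that one can a.a.s.\ construct a large set of pairs with \emph{no} short bad paths, so any approach that hunts for bounded-length bad paths is doomed.

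The paper's actual proof takes a completely different route: it shows (Theorem~\ref{cores_hamiltonian}) that a.a.s., for \emph{every} set-pairing $(S,\mathcal{P})$ with $|S|\ge 510n/\sqrt{\omega}$, there is a sub-pairing $(S',\mathcal{P}')$ on which $G[S']$ is Hamiltonian; the Hamilton path is then automatically bad. The uniformity over all pairings is obtained not by union-bounding over pairings directly, but by reducing to pairing-independent large-deviation events about edge counts between linear-sized subsets (Lemmas~\ref{k_core} and~\ref{expansion_connectivity}), which fail with probability $\exp(-\Omega(n))$ and therefore survive the union bound. Hamiltonicity is then forced via P\'osa rotation and a sprinkling argument (Lemma~\ref{sprinkling_lemma}). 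The constant $510$ comes out of this expansion/rotation analysis, not from optimising moments of short bad paths.
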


In our proofs, we did not try to optimize the constants. Since $\chi_{\text{lin}}(G) \leq \chi_{\text{cen}}(G)$ and, trivially, $\chi_{\text{cen}}(G) \le n$, Theorem~\ref{main_theorem} implies that a.a.s.\ $\chi_{\text{lin}}(G(n,p)) = (1+o(1)) \chi_{\text{cen}}(G(n,p)) = (1+o(1)) n$. In particular, we conclude that Conjecture~\ref{conjecture} holds for almost all graphs. 

\medskip

Supporting Conjecture~\ref{conjecture} is a nice implication but understanding the behaviour of the linear chromatic number for $\Gnp$ seems to be interesting on its own. In particular, our result implies the lower bound for the centred chromatic number of dense binomial random graphs proved in~\cite{perarnau2014tree}, where it was shown that $\chi_{\text{cen}}(G(n, \omega/n)) \geq n - O\left(n / \sqrt{\omega} \right)$ a.a.s. 

Investigating the linear chromatic number for very sparse random graphs, before the giant component is formed, is relatively easy. Observations from~\cite{perarnau2014tree} give us the following.  

\begin{theorem}\label{sparse_linear}
    The following holds a.a.s.: 
    $$
    \chi_{\text{lin}}(\mathcal{G}(n,c/n)) = 
    \begin{cases} 
    \Theta(\log \log n) \quad & \text{if} \quad c \in (0,1) \\ 
    \Theta(\log n) \quad & \text{if} \quad c = 1. 
    \end{cases}
    $$
\end{theorem}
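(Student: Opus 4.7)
The plan is to prove matching upper and lower bounds separately. The upper bounds in both regimes follow immediately from the inequality $\chi_{\text{lin}}(G) \le \chi_{\text{cen}}(G)$ combined with the bounds on the centred chromatic number established in~\cite{perarnau2014tree}: namely, that a.a.s.\ $\chi_{\text{cen}}(\mathcal{G}(n,c/n)) = O(\log \log n)$ for $c \in (0,1)$ and $\chi_{\text{cen}}(\mathcal{G}(n,1/n)) = O(\log n)$.

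For the lower bounds, I would use the fact that restricting a linear colouring of $G$ to the vertex set of any path subgraph $P \subseteq G$ is still a linear colouring of $P$, so $\chi_{\text{lin}}(G) \ge \chi_{\text{lin}}(P)$. Moreover, since every connected subgraph of a path is itself a path, for the path $P_k$ on $k$ vertices we have $\chi_{\text{lin}}(P_k) = \chi_{\text{cen}}(P_k) = \lceil \log_2(k+1) \rceil$, which follows by a standard recursion on the position of the unique centre. Hence it suffices to exhibit a long path in $\mathcal{G}(n,c/n)$: on $\Omega(\log n)$ vertices when $c \in (0,1)$, and on $n^{\Omega(1)}$ vertices when $c = 1$.

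When $c \in (0,1)$, a first-moment computation gives
\[
\E\bigl[\text{number of paths on $\ell+1$ vertices}\bigr] \;=\; \frac{n!}{2\,(n-\ell-1)!}\,p^{\ell} \;\sim\; \frac{n}{2}\,c^{\ell},
\]
which diverges polynomially in $n$ provided $\ell = \alpha \log n$ for any $0 < \alpha < 1/\log(1/c)$. A standard second-moment argument, stratifying pairs of paths by the number of shared vertices and edges, yields concentration and hence the existence of such a path a.a.s.; this gives $\chi_{\text{lin}} \ge \lceil \log_2(\alpha \log n + 2) \rceil = \Omega(\log \log n)$ in this regime. When $c = 1$, I would instead appeal to the classical fact that the diameter of $\mathcal{G}(n,1/n)$ is $\Theta(n^{1/3})$ a.a.s., which immediately exhibits a path on $\Omega(n^{1/3})$ vertices and yields $\chi_{\text{lin}} \ge \Omega(\log n)$.

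The only step requiring any real calculation is the second-moment estimate for long paths in the subcritical regime; this kind of bookkeeping is routine in random graph theory, so I do not anticipate a serious obstacle. The main thing to verify carefully is that the leading-order contribution to the variance comes from pairs of essentially vertex-disjoint paths, as is typical in such subgraph-counting arguments.
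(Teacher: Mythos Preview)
Your proposal is correct and follows the same overall strategy as the paper: upper bounds via $\chi_{\text{lin}} \le \chi_{\text{cen}}$ together with the Perarnau--Serra bounds, and lower bounds via $\chi_{\text{lin}}(P_k) = \chi_{\text{cen}}(P_k) = \Theta(\log k)$ combined with the existence of suitably long paths. The one difference is in the subcritical case $c\in(0,1)$: the paper simply cites the known fact (\L uczak) that some component of $\mathcal{G}(n,c/n)$ has diameter of order $\log n$, whereas you propose rederiving this via a second-moment count of paths of length $\alpha\log n$; your route is self-contained but more work, while the paper's is a one-line citation.
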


On the other hand, supercritical sparse random graphs (when $p=c/n$ for some constant $c > 1$) remain to be investigated. The proof of our main result, Theorem~\ref{main_theorem}, can be adjusted to show that a.a.s.\ $\chi_{\text{lin}}(\mathcal{G}(n,c/n)) = \Theta(n)$, provided that $c$ is large enough. However, there seems to be no hope to apply the current argument to prove it for any $c > 1$. Maybe a.a.s.\ $\chi_{\text{lin}}(\mathcal{G}(n,c/n)) = o(n)$ for some $c > 1$? That would show that Conjecture~\ref{conjecture} is false, since a.a.s.\ $\chi_{\text{cen}}(\mathcal{G}(n,c/n)) = \Theta(n)$ for any $c > 1$. 

\medskip

The paper is structured as follows. We first provide a high level sketch of the proof of the main theorem, Theorem~\ref{main_theorem} (see Subsection~\ref{sec:sketch}). Section~\ref{sec:proof_main} is devoted to the proof of Theorem~\ref{main_theorem}. Observations that prove Theorem~\ref{sparse_linear} can be found in Section~\ref{sec:proof_sparse}.

\subsection{Sketch of the Proof of Theorem~\ref{main_theorem}}\label{sec:sketch}

The starting point of the proof is an idea from the paper of Alon, McDiarmid, and Reed~\cite{alon1991acyclic} on acyclic colourings. Let $x = x(n) \in (0,1)$ and consider any colouring of the vertices of $\Gnp$ which uses as most $(1-x)n$ colour classes. By removing at most one vertex from each class, we can make the sizes of all classes even. Since we remove at most $(1-x)n$ vertices, a set $S$ of size at least $xn$ remains. Vertices in $S$ are necessarily in even classes and so of size at least $2$. In particular, colours that were initially present only one time disappeared. Finally, we (arbitrarily) pair the vertices within each colour class, resulting in at least $xn / 2$ pairs of vertices, where each pair is a subset of a single colour class. Let $\mathcal{P}$ be the set of pairs formed at this step. 

We call a path in $\Gnp$ \textit{bad} if it has no centre, and observe that any path on vertices from $S$ which visits each pair of $\mathcal{P}$ either exactly twice or not at all is bad. To show that the coloring we started with is \textit{not} linear, we seek a bad path for the pairing $\mathcal{P}$. Maybe in each pairing $\mathcal{P}$ there is always a short bad path? The answer is `no'---it is relatively easy to construct a large set of pairs with no short bad paths a.a.s. Alternatively, one might simply look for a Hamilton path on the vertices in $S$. This also turns out to be too much to ask for as, in general, the subgraph of $\Gnp$ induced by the vertices in $S$ may not even be connected. Indeed, there are many isolated vertices in $\Gnp$ for $p$ below the threshold for connectivity $\bar{p} = \log n / n$ so this subgraph can have many isolated vertices.

However, something slightly weaker turns out to be true. By repeatedly removing pairs of vertices in $S$ which contain a vertex of small degree until no such pairs remain, we reach a subset $S' \subseteq S$ and a sub-pairing $\mathcal{P}' \subseteq \mathcal{P}$. (This procedure is reminiscent of the construction of the $k$-\textit{core} of the subgraph induced by $S$.) Provided that $x$ is large enough, one can show that, a.a.s., not too many pairs are removed and that the resulting set $S'$ induces a connected subgraph with good expansion. Using the now-standard rotation-extension technique of P\'{o}sa~\cite{posa1976hamiltonian}, it can then be shown that this subgraph has a Hamilton path a.a.s. Since $\mathcal{P}' \subseteq \mathcal{P}$, any such path is bad, and hence the colouring we started with is a.a.s.\ not linear.

\section{Dense Case: $np \to \infty$ (Proof of Theorem~\ref{main_theorem})} \label{sec:proof_main}

We will use the following specific instances of Chernoff's bound. Let $X \in \textrm{Bin}(n,p)$ be a random variable distributed according to a Binomial distribution with parameters $n$ and $p$. Then, a consequence of \emph{Chernoff's bound} (see e.g.~\cite[Theorem~2.1]{JLR}) is that for any $t \ge 0$ we have
	\begin{eqnarray}
		\prob( X \ge \E X + t ) &\le& \exp \left( - \frac {t^2}{2 (\E X + t/3)} \right)  \label{chern1} \\
		\prob( X \le \E X - t ) &\le& \exp \left( - \frac {t^2}{2 \E X} \right).\label{chern}
	\end{eqnarray}

We define a \textit{set-pairing} to be a pair $(S, \mathcal{P})$ where $S \subseteq [n]$ is a set of even size and $\mathcal{P}$ is a set of the form 
$$
\{\{v_{1},v_{2}\},\{v_{3},v_{4}\}, \dots, \{v_{|S|-1}, v_{|S|}\}\}
$$
where $v_{1},v_{2},\dots,v_{|S|}$ is some ordering of the vertices of $S$. Given set-pairings $(S,\mathcal{P})$ and $(S',\mathcal{P}')$, we say that $(S',\mathcal{P}') \subseteq (S, \mathcal{P})$ if and only if $S' \subseteq S$ and $\mathcal{P}' \subseteq P$.

\medskip

Throughout this section, $\omega = \omega(n)$ will denote a function of $n$ which grows to infinity arbitrarily slowly and satisfies $\omega \leq n$ so that $p=\omega/n \le 1$. Recall that for a graph $G$ on vertex set $[n]$ and $S \subseteq [n]$, we let $G[S]$ denote the subgraph of $G$ induced by the vertices in~$S.$

\medskip

Let $(S, \mathcal{P})$ be a set-pairing and let $G$ be a graph on vertex set $[n]$. For a given $k \geq 0$ we define the $k$-\textit{core of} $(S,\mathcal{P})$ in $G$, denoted $C_{k}^{G}(S, \mathcal{P})$ to be the maximal induced subgraph of $G[S]$ with minimum degree at least $k$ and such that if $v \in V(C_{k}^{G}(S, \mathcal{P}))$ and $\{v,w\} \in \mathcal{P}$, then $w \in V(C_{k}^{G}(S,\mathcal{P}))$. Note that any maximal subgraph satisfying these conditions is necessarily unique, else a larger subgraph satisfying the same conditions could be constructed by taking a union. Thus the definition is unambiguous (though the $k$-core may be empty). Moreover, to find the $k$-core of $(S,\mathcal{P})$ one may repeatedly remove vertices of degree less than $k$ (together with their partners in $\mathcal{P}$) until there is no vertex of degree less than $k$. 

The key feature of $C_{k}^{G}(S, \mathcal{P})$ is that it respects the original pairing $\mathcal{P}$: for any $\{v,w\} \in \mathcal{P}$, either both $v$ and $w$ are in the $k$-core, or neither of them is. In this subsection, we establish some properties of $k$-cores of set pairings $(S,\mathcal{P})$ in the binomial random graph $\Gnp$. When the host graph is clear from context, we simply write $C_{k}(S, \mathcal{P})$. 

\medskip

We will first show that $k$-cores are large (Subsection~\ref{cores}) and have good expansion properties (Subsection~\ref{expansion}). The results in these two subsections are adaptations of similar results in~\cite[Section 3]{krivelevich2014cores} to the present application. These observations, combined via sprinkling with the rotation-extension technique of P\'{o}sa, imply that the corresponding $k$-cores have Hamilton paths (Subsection~\ref{long_paths}).

\subsection{$k$-cores are Large}\label{cores}

Our first lemma shows that, a.a.s., for every set-pairing $(S, \mathcal{P})$ with $|S| \geq cn / \sqrt{\omega}$, the core $C_{|S|p/3}(S, \mathcal{P})$ in $\Gnp$ has at least $|S|/2$ vertices.

\begin{lemma}\label{k_core}
Let $p = \omega/n$, $c > 0$, and let $G = \Gnp$. Then, a.a.s.\ for every set-pairing $(S, \mathcal{P})$ with $|S| \geq c n / \sqrt{\omega}$, there exists a set-pairing $(S', \mathcal{P}') \subseteq (S, \mathcal{P})$ such that $|S'| \geq |S|/2$ and the subgraph $G[S']$ has minimum degree at least $|S|p / 3$.
\end{lemma}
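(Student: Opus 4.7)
My plan is to argue by contradiction via a first-moment union bound. Suppose for contradiction that with positive probability there is a set-pairing $(S,\mathcal{P})$ with $|S| = s \ge cn/\sqrt{\omega}$ whose paired $k$-core (with $k = sp/3$) has size less than $s/2$. Running the peeling process described above, let $R_1$ denote the set of vertices whose low degree triggered a removal, and $R_2 = \pi(R_1)$ their partners; then $|R_1| = |R_2| > s/4$ and $S' = S \setminus (R_1 \cup R_2)$.

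The deterministic backbone of the argument is that summing the inequalities $\deg_{G[V_i]}(v_i) < k$ over the peeling steps counts each edge of $G[R_1]$ exactly once (at its earlier endpoint in the peeling order) together with every edge from $R_1$ to $S'$, giving the deterministic inequality
\[
e(R_1, R_1) + e(R_1, S') \;<\; k \, |R_1|.
\]
For a fixed triple $(S, R_1, R_2)$ the left-hand side is a binomial random variable with mean $|R_1| \, p \, (|R_1|/2 + |S'| - 1/2)$, which exceeds $k|R_1|$ by a constant factor whenever $|R_1| < 4s/9$ (equivalently $|S'| > s/9$). In this range Chernoff's bound \eqref{chern} gives $\prob[e(R_1, R_1) + e(R_1, S') < k |R_1|] \le \exp(-\Omega(s^2 p))$. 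The number of triples $(S, R_1, R_2)$ with $|S| = s$ and $|R_1| = |R_2| = r_1$ is at most $\binom{n}{s}\binom{s}{r_1}\binom{s-r_1}{r_1}$, whose logarithm scales as $O(s \log(en/s)) = O(n \log \omega / \sqrt{\omega})$ for $s = cn/\sqrt{\omega}$; since $s^2 p = c^2 n$ dominates this as $\omega \to \infty$, a union bound rules out the event in this regime.

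The main obstacle is the residual regime $|R_1| \in [4s/9, s/2]$, equivalently $|S'| \le s/9$. Here the mean of the left-hand side above no longer exceeds $k|R_1|$ and the Chernoff bound is vacuous. I would handle this case by working with $G[S']$ directly: the minimum-degree-$k$ condition on $G[S']$ forces deterministically $|S'| \ge k+1$ and $e(S', S') \ge k|S'|/2$, and for $|S'| \le s/9$ the latter exceeds the expected edge count $\binom{|S'|}{2}p$ by a factor of at least $3$, placing it in the large-deviation regime of Chernoff's upper tail \eqref{chern1}. A direct union bound over small sets $S'$ of such sizes (whose log-count $O(|S'| \log(en/|S'|))$ is easily dominated by the resulting Chernoff exponent) closes out the argument whenever $\omega \to \infty$. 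Combining the two regimes yields $|S'| \ge s/2$ a.a.s.\ for every qualifying set-pairing.
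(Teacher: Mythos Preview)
Your overall strategy---run the peeling process, extract a deterministic edge inequality, and close with Chernoff plus a union bound---matches the paper's. The crucial difference is that you run the peeling to \emph{completion}, whereas the paper stops after exactly $t=\lfloor |S|/4\rfloor$ steps. That single change is what makes the paper's argument go through cleanly and what creates the gaps in yours.

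Concretely, your residual-regime argument does not close. First, the case $S'=\varnothing$ (i.e.\ $|R_1|=s/2$) is not covered at all: the edge-density argument for $G[S']$ is vacuous there, and your Case~1 inequality $e(R_1,R_1)+e(R_1,S')<k|R_1|$ is useless since its left side then has mean $\binom{s/2}{2}p\approx s^2p/8$, already \emph{below} the threshold $s^2p/6$. Second, even when $S'\ne\varnothing$, the bound~(\ref{chern1}) is too weak for slowly growing $\omega$. At the bottom of the range $s'=|S'|=k+1$ with $s=cn/\sqrt{\omega}$ (so $k\sim c\sqrt{\omega}/3$), the exponent that~(\ref{chern1}) yields is $\Theta(ks')=\Theta(\omega)$, whereas $\log\binom{n}{s'}\sim s'\log(en/s')=\Theta(\sqrt{\omega}\,\log n)$; the union bound therefore fails whenever $\omega=o((\log n)^2)$. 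A sharper binomial upper tail of the form $(eNp/m)^m$ would rescue this step, but that is not what~(\ref{chern1}) delivers. There is also a minor boundary issue in Case~1: the implied constant in your $\exp(-\Omega(s^2p))$ degenerates to $0$ as $|R_1|\to 4s/9$, so the two cases do not glue at the stated cut.

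Stopping the peeling at step $\lfloor s/4\rfloor$ sidesteps all of this. It produces disjoint sets $B_t,S_t$ of \emph{fixed} sizes $\sim s/4$ and $\sim s/2$, and the single inequality $|E(B_t,S_t)|<tk\le s^2\omega/(12n)$ is a constant-factor lower-tail deviation of a binomial with mean $\sim s^2\omega/(8n)$. One application of~(\ref{chern}) and a union bound over pairs $(P,Q)$ of these fixed sizes then finishes the proof for every $\omega\to\infty$, with no residual regime to handle.
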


\begin{proof} 
First, fix a set-pairing $(S, \mathcal{P})$ with $|S| \geq cn / \sqrt{\omega} = \Omega(\sqrt{n})$. We build a sub set-pairing $(S', \mathcal{P}')$ using the following simple algorithm. Set $S_{0} = S$ and $\mathcal{P}_{0} = \mathcal{P}$. For $i = 0, 1, 2,\dots$, if $G[S_{i}]$ contains a vertex of degree less than $|S|p/3$, then let $v_{i+1}$ be the smallest such vertex, and let $w_{i+1}$ be its partner such that $\{v_{i+1},w_{i+1}\} \in \mathcal{P}_{i}$; set $S_{i+1} = S_{i} \setminus \{v_{i+1},w_{i+1}\}$ and $\mathcal{P}_{i+1} = P_{i} \setminus \{\{v_{i+1},w_{i+1}\}\}$. If no vertex of degree less than $|S|p/3$ exists in $G[S_{i}]$ (which is trivially true if $S_{i} = \varnothing$), then the algorithm terminates after $i$ steps. 

Let $T = T(S,\mathcal{P})$ be the termination time of the algorithm and let $S' = S_{T}$. If $T < |S|/2$, then clearly $G[S']$ is a subgraph of $\Gnp$ on $|S| - 2T$ vertices of minimum degree at least $|S|p/3$. To prove the lemma, we will show that a.a.s.\ $T(S,\mathcal{P}) \leq |S|/4$ for all set-pairings $(S, \mathcal{P})$ with $|S| \geq cn/\sqrt{\omega}$. 

For each $i \geq 1$, let $B_{i} = \{v_{1},v_{2},\dots,v_{i}\}$, where the $v_{j}$'s are as defined in the algorithm. By construction, we have $|E(B_{i}, S_{i})| < i \cdot |S|p/3$. Suppose that $T > |S|/4$. Then, at step $t =\left\lfloor |S|/4 \right\rfloor$ of the algorithm, we find disjoint sets $B_{t}$ and $S_{t}$ of sizes $\lfloor |S|/4 \rfloor = (1+o(1)) |S|/4$ and $|S| - 2 \lfloor |S| / 4 \rfloor = (1+o(1)) |S|/2$, respectively, such that 
$$
|E(B_{t}, S_{t})| < \left\lfloor \frac{|S|}{4} \right\rfloor \cdot \frac{|S|p}{3} \le \frac{|S|^{2}\omega}{12n}.
$$ 

The preceding shows that for any set-pairing $(S, \mathcal{P})$ with $|S| \geq cn / \sqrt{\omega}$, the event $\{T(S,\mathcal{P}) > |S| / 4 \}$ implies the existence of a pair of disjoint sets $P, Q \subseteq [n]$ such that $|P| = \left\lfloor |S|/4 \right\rfloor$, $|Q| = |S|- 2 \lfloor |S|/4 \rfloor$, and $|E(P,Q)| < \frac{|S|^{2} \omega}{12n}$. Thus,
\begin{equation}\label{union_bound}
\prob\left(\bigcup_{(S,\mathcal{P})}\left\{T(S,\mathcal{P}) > \frac{|S|}{4}\right\}\right) \leq \prob\left(\bigcup_{P,Q}\left\{|E(P,Q)| < \frac{s^{2} \omega}{12n}\right\}\right),
\end{equation}
where the union on the left is taken over all set-pairings with $|S| \geq cn / \sqrt{\omega}$ and the union on the right is over all disjoint $P,Q$ with $|P| = \left\lfloor s/4 \right\rfloor$, $|Q|=s - 2 \left\lfloor s/4 \right\rfloor$, and $s \geq cn / \sqrt{\omega}$.

Consider a fixed pair of disjoint sets $P,Q$ of sizes $\lfloor s/4 \rfloor$ and $s - 2 \lfloor s/4 \rfloor$, respectively, for some $s \geq cn / \sqrt{\omega}$. The cut size $|E(P,Q)|$ is the binomial random variable $X \sim \textrm{Bin}(|P||Q|,p)$ with mean
$$
\E[X] = |P||Q|p = (1+o(1))\frac{s^{2} \omega}{8n}.
$$
From Chernoff's bound~(\ref{chern}) applied with $t=\E[X]-\frac{s^{2} \omega}{12n}=(1+o(1))\E[X]/3$, we then get 
\begin{eqnarray*}
	\prob\left(|E(P,Q)| < \frac{s^{2} \omega}{12n}\right) &\leq& \exp\left\{- (1+o(1)) \frac{\E[X]}{18} \right\}  ~=~ \exp\left\{-(1+o(1))\frac{s^{2}\omega}{144n}\right\} \\
	&\leq&\exp\left\{-\frac{s^{2}\omega}{150n} \right\} ~\leq~ \exp\left\{-\frac{cs\sqrt{\omega}}{150} \right\},
\end{eqnarray*}
where the second equality holds for $n$ sufficiently large, and in the final equality we use the fact that $s \geq cn / \sqrt{\omega}$. For a given $s$, the number of choices for the sets $P$ and $Q$ is at most
$$
\binom{n}{\left\lfloor s/4 \right\rfloor}\binom{n}{s - 2\left\lfloor s/4 \right\rfloor} \leq n^{O(1)}\left(\frac{4ne}{s}\right)^{s/4}\left( \frac{2ne}{s}\right)^{s/2} = n^{O(1)}\left( \frac{2^{\frac{4}{3}}ne}{s} \right)^{\frac{3s}{4}}.
$$
(The $n^{O(1)}$ factor is the price paid for getting rid of ceilings; the constant implied in the $O(\cdot)$ notation does not depend on $s$.) Using the fact that $s \geq cn/\sqrt{\omega}$, the right-hand side above is at most
$$
n^{O(1)}\left( \frac{2^{\frac{4}{3}}\sqrt{\omega}e}{c} \right)^{\frac{3s}{4}} = \exp\left\{(1+o(1))\frac{3s}{8}\log \omega \right\} \leq \exp\left\{ \frac{s\log \omega}{2} \right\}.
$$
(The inequality holds for $n$ sufficiently large.) Thus, the probability that there exist disjoint sets $P,Q$ of sizes $\lfloor s/4 \rfloor$ and $s - 2 \lfloor s/4 \rfloor$, respectively, such that $|E(P,Q)| \leq \frac{s^{2} \omega}{12n}$ is at most 
$$
\exp\left\{-\frac{cs\sqrt{\omega}}{150} + \frac{s\log\omega}{2} \right\} = \exp\left\{- \frac{cs\sqrt{\omega}}{150} \left( 1 - \frac{75 \log\omega}{c\sqrt{\omega}} \right) \right\} \le \exp\left\{- \frac{cs\sqrt{\omega}}{200} \right\}.
$$
(As always, the inequality holds for $n$ sufficiently large.) It follows that
\begin{eqnarray*}
\prob\left(\bigcup_{P,Q}\left\{|E(P,Q)| \leq \frac{s^{2} \omega}{12n}\right\}\right) &\leq& \sum_{s = \lceil cn/\sqrt{\omega} \rceil}^{n} \exp\left\{- \frac{cs\sqrt{\omega}}{200} \right\} \\
&\leq& n e^{-\Omega(n)} = o(1).
\end{eqnarray*}
Based on~(\ref{union_bound}), we conclude that a.a.s.\ $T(S,\mathcal{P}) \leq |S|/4$ for all set-pairings $(S, \mathcal{P})$ with $|S| \geq cn / \sqrt{\omega}$. This completes the proof of the lemma.
\end{proof}

\subsection{$k$-cores are Good Expanders}\label{expansion}

The previous lemma, Lemma~\ref{k_core}, shows that, a.a.s., for every set-pairing $(S, \mathcal{P})$ with $|S| \geq 2cn / \sqrt{\omega}$, the core $C_{|S|p/3}(S, \mathcal{P})$ in $\Gnp$ has at least $|S|/2 \ge cn / \sqrt{\omega}$ vertices. By definition, the minimum degree of $C_{|S|p/3}(S, \mathcal{P})$ is at least $|S|p/3 \ge |V(C_{|S|p/3}(S, \mathcal{P}))| p / 3$. Our next lemma implies that a.a.s.\ for every set-pairing $(S, \mathcal{P})$ with $|S| \geq 2cn / \sqrt{\omega}$, the core $C_{|S|p/3}(S, \mathcal{P})$ in $\Gnp$ is a good expander.

\begin{lemma}\label{expansion_connectivity}
Let $p = \omega / n$ and $c > 0$. The following properties hold a.a.s.: 
 	\begin{enumerate}% [label = \roman*)]	
		\item [i)] For any subgraph $H$ of $\Gnp$ on at least $cn/\sqrt{\omega}$ vertices with $\delta(H) \geq |V(H)|p / 3$, we have $|N_{H}(X) \setminus X| > 2|X|$ for every $X \subseteq V(H)$ with $|X| \leq |V(H)| / 45$.
		\item [ii)] Every induced subgraph $H$ of $\Gnp$ with $\delta(H) \geq |V(H)|p/3$ on at least $cn/\sqrt{\omega}$ vertices is connected. 
	\end{enumerate}
\end{lemma}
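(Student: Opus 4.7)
The strategy for both parts is a union-bound argument: identify a low-probability event in $\Gnp$ implied by the failure of each conclusion, and sum over configurations. I plan to prove (i) first and then reduce most of (ii) to it, handling the remaining ``balanced'' disconnections via a direct Chernoff bound.

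For part (i), suppose for contradiction that such $H$ and $X$ exist; set $n_0 := |V(H)|$ and $k := |X|$, as well as $Y := N_H(X) \setminus X$ (of size at most $2k$) and $Z := X \cup Y$ (of size at most $3k$). Since $H \subseteq \Gnp$ and each $v \in X$ has at least $n_0 p/3$ $H$-neighbors, all lying in $Z$,
\[
T \;:=\; 2\, e_{\Gnp}(X) + e_{\Gnp}(X, Y) \;=\; \sum_{v \in X} \deg_{\Gnp}\bigl(v,\, Z \setminus \{v\}\bigr) \;\geq\; k \cdot \frac{n_0 p}{3}.
\]
A purely deterministic count $T \leq 2\binom{k}{2} + 2k|Y| \leq 3k^2$ forces $k \geq n_0 p/9$, so only this range requires probabilistic treatment. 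In this range, $\E T \leq 3k^2 p \leq \tfrac{1}{15}\, k n_0 p$ (using $k \leq n_0/45$), a factor of $5$ below the threshold. Since $T \leq 2 e_{\Gnp}[Z]$ and $e_{\Gnp}[Z]$ is stochastically dominated by $\textrm{Bin}\bigl(\binom{3k}{2}, p\bigr)$, Chernoff's bound~\eqref{chern1} yields probability at most $\exp(-c_1 k n_0 p)$ at each fixed $(X, Y)$ for an absolute $c_1 > 0$. Union-bounding over the $\binom{n}{k}\binom{n}{2k}$ pairs $(X, Y)$ and over $k$, and using $n_0 p \geq c\sqrt{\omega}$ to absorb the entropy $O(k \log(en/k))$, completes (i).

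For part (ii), I reduce to (i) whenever possible. Suppose $H = \Gnp[U]$ is disconnected with $n_0 := |U| \geq cn/\sqrt{\omega}$ and $\delta(H) \geq n_0 p/3$, and let $U_1$ be a smallest connected component, so $|U_1| \leq n_0/2$. The min-degree condition forces $|U_1| \geq n_0 p/3 + 1$. If $|U_1| \leq n_0/45$, then $X := U_1$ yields $|N_H(X) \setminus X| = 0 \leq 2|X|$, contradicting part (i). Otherwise $n_0/45 < |U_1| \leq n_0/2$, and
\[
|U_1| \cdot |U \setminus U_1| \;\geq\; \frac{n_0}{45} \cdot \frac{n_0}{2} \;=\; \frac{n_0^2}{90},
\]
so the event that $\Gnp$ has no edges across the cut has probability at most $\exp(-p n_0^2/90) = \exp\bigl(-\Omega(n_0 \sqrt{\omega})\bigr)$. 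A union bound over $(U, U_1)$, with $\binom{n}{n_0}\binom{n_0}{|U_1|}$ configurations for each $n_0$, and using $n_0 \geq cn/\sqrt{\omega}$, has entropy $O(n_0 \log(en/n_0)) = O(n_0 \log \omega)$, which is dominated by the Chernoff saving once $\omega \to \infty$.

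The main obstacle I expect is in part (i) near the lower boundary $k \approx n_0 p/9$, where the Chernoff exponent $c_1 k n_0 p = \Theta(k \sqrt{\omega})$ must overcome the entropy cost $\Theta(k \log n)$; handling this uniformly in $\omega \to \infty$ will likely require a finer subdivision of the $k$-range or a sharper per-vertex bound exploiting the independence across $v \in X$ of the variables $e_v := \deg_{\Gnp}(v, Y)$ (which are supported on disjoint edge sets, each unlikely to reach the target $\sim n_0 p/3$ when $k$ is small).
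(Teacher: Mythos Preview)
Your reduction in part (ii) is essentially the same as the paper's, and the high-level shape of part (i) --- witness sets $X,Y$, edge-count lower bound from the minimum degree, then a tail bound plus union bound over $(X,Y)$ --- is also the same. The gap you already flagged is real, and it is exactly the place where your argument diverges from the paper.

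For small $k$ (say $k\le \log n$) the entropy of choosing $(X,Y)$ is of order $k\log n$, while your Chernoff bound on $T$ gives at best $\exp(-c_1 k\, n_0 p)=\exp\bigl(-O(k\sqrt{\omega})\bigr)$; when $\omega$ grows slowly (e.g.\ $\omega=(\log\log n)^2$) this does not beat the entropy. The paper avoids this by \emph{not} using Chernoff here. Instead it uses the crude but sharper-in-this-regime binomial upper tail
\[
\Pr\bigl(\mathrm{Bin}(N,p)\ge m\bigr)\ \le\ \binom{N}{m}p^{m}\ \le\ \Bigl(\frac{eNp}{m}\Bigr)^{m},
\]
with $N\le \tfrac{5}{2}k^{2}$ potential edges incident to $X$ inside $X\cup Y$ and threshold $m=\lceil k\,n_0 p/6\rceil$. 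The point is that the \emph{base} $eNp/m\asymp k/n_0$ tends to $0$ when $k$ is small, so the bound is $(O(k/n_0))^{\Theta(k\sqrt{\omega})}$, which for $k\le\log n$ and $n_0\ge c\sqrt{n}$ gives a saving of order $k\sqrt{\omega}\,\log n$, comfortably beating the $O(k\log n)$ entropy. Your Chernoff bound throws away precisely this gain from the small base. The paper then splits the sum at $k=\lfloor \log n\rfloor$: for small $k$ the small-base effect wins; for large $k$ the base is only bounded by $15e/45<1$, but now the exponent $\Theta(k\sqrt{\omega})$ beats the entropy $O(k\log(n/k))=O(k\log\omega)$. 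This is simpler than the per-vertex independence idea you suggested, and it closes the gap cleanly.
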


\begin{proof} 
We begin with i). Suppose that there is a subgraph $H$ of $\Gnp$ on $s \geq cn/\sqrt{\omega}$ vertices with minimum degree at least $sp/3$ that fails the expansion condition in the statement. Let $X \subseteq V(H)$ be a subset of vertices with $|X| \leq s/45$ and such that $|N_{H}(X) \setminus X| \leq 2|X|.$ Then, $N_{H}(X) \setminus X$ is contained in some $Y \subseteq V(H)$, disjoint from $X$, with $|Y| = 2|X|$. In $H$, there are at most $\binom{|X|}{2} + |X||Y| \leq \frac{5}{2}|X|^{2}$ possible edges incident with $X.$ At least 
$$
\frac{\delta(H)|X|}{2} \geq \frac{sp|X|}{6} = \frac{s\omega|X|}{6n}
$$ 
of these edges must be present in $H$, and hence also in $\Gnp$. Writing $|X| = j \le s/45$, the probability that this occurs for a given pair of sets $X$ and $Y$ is at most 
$$
\binom{\left\lfloor \frac{5}{2}j^{2} \right\rfloor}{\left\lceil \frac{s\omega j}{6n} \right\rceil}p^{\lceil s\omega j/6n \rceil} \leq \left( \frac{15ej n}{s\omega} p \right)^{\lceil s\omega j / 6n \rceil} \le \left( \frac{15ej}{s}\right)^{s\omega j / 6n}.
$$

For $s \geq cn/\sqrt{\omega}$, let $\mathcal{B}_{s}$ be the event that there exists a subgraph $H$ of $\Gnp$ with $s$ vertices and minimum degree at least $sp/3$ such that the expansion condition in the statement of the lemma fails. We have 
\begin{eqnarray}
	\prob(\mathcal{B}_{s}) &\leq& \sum_{j=1}^{\lfloor s/45 \rfloor}\binom{n}{j}\binom{n}{2j} \left( \frac{15ej}{s}\right)^{s\omega j/6n} \nonumber \\
    &\leq& \sum_{j=1}^{\lfloor s/45 \rfloor}\left[\frac{1}{4}\left( \frac{ne}{j} \right)^{3}\left(\frac{15ej}{s}		\right)^{s\omega/6n}\right]^{j} \nonumber \\
	&\leq& \sum_{j=1}^{\lfloor s/45 \rfloor}\left[\frac{1}{4}\left(\frac{ne}{j} \right)^{3}\left(\frac{15ej}{s}\right)^{c\sqrt{\omega}/6}\right]^{j}, \label{eq:the_sum_to_bound}
\end{eqnarray}
where in the final inequality we use that $15ej/s < 1$ for $j \leq \lfloor s / 45 \rfloor$ and that $s \geq cn/\sqrt{\omega}$. We will show that the last sum above is $o(1/n)$ uniformly in $s$. This will suffice to finish the proof of part i), since it implies that
$$
\prob\left(\bigcup_{s = \lceil cn/\sqrt{\omega} \rceil}^{n}\mathcal{B}_{s} \right) \leq \sum_{s = \lceil cn/\sqrt{\omega}\rceil}^{n}\prob(\mathcal{B}_{s}) = n\cdot o(1/n) = o(1).
$$

Now, we bound the sum~(\ref{eq:the_sum_to_bound}). We remark first that, since $\omega \leq n$, we have $s \geq cn / \sqrt{\omega} = c\sqrt{n}$ and, in particular, $s \gg \log n$. We will split the sum~(\ref{eq:the_sum_to_bound}) into two parts corresponding to $j \leq \lfloor \log n \rfloor$ and, respectively, $j > \lfloor \log n \rfloor.$

For $1 \leq j \leq \lfloor \log n \rfloor$, we have 

$$
\frac{1}{4}\left( \frac{ne}{j} \right)^{3} \left(\frac{15ej}{s} \right)^{ c\sqrt{\omega}/6 } \leq  \frac{(ne)^{3}}{4} \left(\frac{15e \log n}{c\sqrt{n}} \right)^{c\sqrt{\omega}/6} =: g(n).
$$
where in the inequality we use that $1 \leq j \leq \log n$ and $s \geq  c\sqrt{n}$. It is easy to see that $g(n) = o(1/n)$, and hence
$$
\sum_{j=1}^{\lfloor \log n \rfloor}\left[\frac{1}{4}\left( \frac{ne}{j} \right)^{3}\left(\frac{15ej}{s}\right)^{c\sqrt{\omega}/6}\right]^{j} \leq \sum_{j=1}^{\lfloor \log n \rfloor}(g(n))^{j} = O(g(n)) = o(1/n).
$$

For $\lfloor \log n \rfloor +1 \leq j \leq \left\lfloor\frac{s}{45}\right\rfloor$, observe that
\begin{eqnarray*}
\frac{1}{4}\left( \frac{ne}{j} \right)^{3} \left(\frac{15ej}{s} \right)^{c\sqrt{\omega}/6 } &=& \frac{(15e^{2})^{3}}{4}\left(\frac{n}{s} \right)^{3}\left( \frac{15ej}{s} \right)^{c\sqrt{\omega}/6 - 3} \\
&\leq&  \frac{(15e^{2})^{3}}{4}\left(\frac{\sqrt{\omega}}{c} \right)^{3}\left( \frac{15e}{45} \right)^{c\sqrt{\omega}/6-3}:=h(n).
\end{eqnarray*}
Now, $h(n) = \exp\left\{O(\log \omega)-\Omega(\sqrt{\omega}) \right\} = \exp\left\{-\Omega(\sqrt{\omega}) \right\} =o(1)$, and hence for $n$ large enough, $h(n) < 1/3 < 1/e$ and so we have
\begin{eqnarray*}
\sum_{j=\lfloor \log n \rfloor + 1}^{\lfloor s/45 \rfloor}\left[\frac{1}{4}\left( \frac{ne}{j} \right)^{3}\left(\frac{15ej}{s}\right)^{c\sqrt{\omega}/6}\right]^{j} &\leq& \sum_{j=\lfloor \log n \rfloor + 1}^{\infty}(h(n))^{j} = O\left((h(n))^{\log n} \right) \\
&=& O((1/3)^{\log n}) = o(1/n).
\end{eqnarray*}
Thus, we conclude that for $s \geq cn / \sqrt{\omega}$, the sum~(\ref{eq:the_sum_to_bound}) is $o(1/n)$, uniformly in $s$. This completes the proof of part i).

\medskip

For ii), let $H$ be an induced subgraph on $s \geq cn / \sqrt{\omega}$ vertices with $\delta(H) \geq |V(H)|p/3$. By part i), we may assume that $H$ does not have a component with $\frac{s}{45}$ or fewer vertices. If $H$ has more than one component of size greater than $s/45$, then we find a pair of disjoint sets of $\left\lceil s/45 \right\rceil$ vertices each which induce no edges between them in $\Gnp$. (The assumption that $H$ is induced is necessary here.) The probability of finding such sets is at most 
\begin{eqnarray*}
    \binom{n}{\lceil s/45 \rceil}^{2}(1-p)^{\lceil s/45 \rceil^{2}} &\leq& O(\omega)\left(\frac{45ne}{s} \right)^{2s/45}e^{-s^{2}\omega/45^{2}n}\\
    &\leq& O(\omega) \left( \frac{45e\sqrt{\omega}}{c} \right)^{2s/45}e^{-cs\sqrt{\omega}/45^{2}}\\
    &=&\exp\left\{-\frac{cs\sqrt{\omega}}{45^{2}}\left(1 - O\left( \frac{\log \omega}{\sqrt{\omega}} \right) \right) \right\}\\
    &\leq& \exp\left\{ -\frac{cs\sqrt{\omega}}{50^{2}} \right\},
\end{eqnarray*}
with the final inequality holding for $n$ large enough. Thus, the probability that there exists an induced subgraph $H$ on $s \geq cn / \sqrt{\omega}$ vertices with multiple components of size greater than $s/45$ is at most 
$$
\sum_{s = \lceil cn/\sqrt{\omega} \rceil}^{n}\exp\left\{-\frac{cs\sqrt{\omega}}{50^{2}} \right\} \leq ne^{-\Omega(n)} = o(1).
$$
We conclude that a.a.s., every induced subgraph $H$ of $\Gnp$ on at least $cn / \sqrt{\omega}$ vertices with $\delta(H) \geq |V(H)|p/3$ is connected. This finishes part ii) of the proof and so the proof of the lemma is finished.
\end{proof}

\subsection{Sprinkling and P\'{o}sa Rotations}\label{long_paths}

The main result in this section is the following. 

\begin{theorem}\label{cores_hamiltonian}
Let $p = \omega / n$ and $G = \Gnp$. Then, a.a.s., for all set-pairings $(S, \mathcal{P})$ with $|S| \geq 510n / \sqrt{\omega}$, there is a nonempty sub-pairing $(S', \mathcal{P}') \subseteq (S, \mathcal{P})$ such that $G[S']$ has a Hamilton path.
\end{theorem}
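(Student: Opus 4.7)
The plan is to combine the two structural lemmas above with the classical rotation-extension technique of P\'osa via a sprinkling argument.

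\emph{Setup.} First, I decompose $\Gnp$ by sprinkling: write $(1-p) = (1-p_1)(1-p_2)$ with, say, $p_1 = p/2$, so that $\Gnp$ is distributed as $G_1 \cup G_2$ with independent $G_i \sim \mathcal{G}(n, p_i)$ and both $np_i \to \infty$. Applying Lemma~\ref{k_core} to $G_1$ (with the constant adjusted by a factor of $\sqrt{2}$ to absorb $\omega_1 = \omega/2$), a.a.s.\ every set-pairing $(S, \mathcal{P})$ with $|S| \ge 510\, n/\sqrt{\omega}$ yields, via the peeling algorithm, a sub-pairing $(S', \mathcal{P}') \subseteq (S, \mathcal{P})$ with $|S'| \ge |S|/2 \ge 255\, n/\sqrt{\omega}$ and $\delta(G_1[S']) \ge |S'| p_1/3$. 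Lemma~\ref{expansion_connectivity} then gives that such a $G_1[S']$ is a.a.s.\ connected and satisfies $|N(X) \setminus X| > 2|X|$ for every $X \subseteq S'$ of size at most $|S'|/45$. Call any $S' \subseteq [n]$ for which $G_1[S']$ enjoys these three properties \emph{good}.

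\emph{P\'osa + sprinkling.} By the classical rotation-extension lemma, the 2-expansion of $G_1[S']$ implies that in any supergraph $H \supseteq G_1[S']$ whose longest path is not Hamilton, there exist at least $\Omega(|S'|^2)$ non-edges $\{u,v\}$ (``boosters'') for which $H \cup \{uv\}$ has a strictly longer longest path. I would then use the sprinkled edges of $G_2[S']$ to supply these boosters and iteratively extend the longest path into a Hamilton path of $G[S']$. For a fixed good $S'$ of size $s \ge 255\, n/\sqrt{\omega}$, we have $p_2 s^2 = \Theta(n)$, so the expected number of booster candidates supplied by $G_2[S']$ is $\Omega(n)$, comfortably exceeding the at most $s = O(n/\sqrt{\omega})$ extensions needed.

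\emph{Main obstacle: the union bound.} The hardest step is making the per-$S'$ failure probability of the sprinkling-rotation argument small enough for a union bound over all good $S'$. Since ``goodness'' is a local property of $G_1[S']$, the number of good $S'$ can be as large as $2^n$, so we need a per-$S'$ failure probability of $\exp(-\Omega(n))$. A naive iterated sprinkling with $\Theta(s)$ independent rounds gives only $s \exp(-\Omega(p_2 s)) = \exp(-\Omega(\sqrt{\omega}))$ per round, which is insufficient. To reach $\exp(-\Omega(p_2 s^2)) = \exp(-\Omega(n))$, one must exploit the substantial overlap between the booster sets across consecutive extension steps, so that a single sample of $G_2[S']$ supplies all the needed boosters simultaneously. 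This is the argument used by Krivelevich~\cite{krivelevich2014cores} to prove Hamiltonicity of $k$-cores of $\mathcal{G}(n, p)$, adapted here to the set-pairing setting. A union bound over good $S'$ then yields the desired Hamilton path for every set-pairing, completing the proof.
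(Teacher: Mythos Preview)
Your proposal is correct and follows essentially the same route as the paper: sprinkle $p$ into $p_1,p_2\approx p/2$, apply Lemmas~\ref{k_core} and~\ref{expansion_connectivity} in $G_1$ to obtain a good $S'$ for every set-pairing, then use P\'osa rotation with the $G_2$-edges and a union bound over the at most $2^n$ candidate sets $S'$.

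Two small clarifications. First, $|S'|$ is not $O(n/\sqrt{\omega})$ in general---it ranges all the way up to $n$---so the ``number of extensions needed'' can be $\Theta(n)$; this is harmless because $p_2|S'|^2 \ge \Omega(n)$ whenever $|S'|\ge 255n/\sqrt{\omega}$, and indeed the constant $255>2\sqrt{16200}$ is chosen precisely so that the mean number of successful extensions exceeds $4n$. Second, the paper's mechanism for reaching failure probability $o(2^{-n})$ is not an ``overlap of booster sets'' argument: one simply conditions on the number $r\sim\mathrm{Bin}\bigl(\binom{|S'|}{2},p_2\bigr)$ of $G_2$-edges inside $S'$ (which is $\ge |S'|^2p_2/4$ with probability $1-o(2^{-n})$), then exposes these $r$ edges one at a time in random order, noting that each is a booster with probability at least $|S'|^2/4050\big/\binom{|S'|}{2}\ge 1/2025$ whenever a Hamilton path has not yet appeared; the length of the longest path then stochastically dominates $\mathrm{Bin}(r,1/2025)$, whose mean exceeds $4n$, and Chernoff finishes.
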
 

Before we prove Theorem~\ref{cores_hamiltonian}, let us show that Theorem~\ref{main_theorem} follows from it. 

\begin{proof}[Proof of Theorem~\ref{main_theorem}.]
Let $p = \omega / n$. Let $\phi: [n] \to \{1,2,\dots, c\}$ be a colouring of the vertices of $\Gnp$ with $c \le n - 510n / \sqrt{\omega}$ colour classes. We construct a set-pairing $(S(\phi), \mathcal{P}(\phi))$ associated to $\phi$ as follows. For each $j \in \{ 1, 2, \dots, c \}$, if $|\phi^{-1}(j)|$ is odd, let $v$ be the smallest vertex in $\phi^{-1}(j)$ and set $S_{j}:=\phi^{-1}(j) \setminus \{v\}$; otherwise, let $S_{j}:=\phi^{-1}(j)$. 

For each $j$ such that $s_{j}:=S_{j} > 0$, let $v_{j_{1}},v_{j_{2}}, \dots, v_{j_{s_{j}}}$ be the vertices of $S_{j}$ in increasing order, and define the pairing $\mathcal{P}_{j} := \{\{v_{j_{1}}, v_{j_{2}}\}, \{v_{j_{3}}, v_{j_{4}}\}, \dots, \{v_{j_{s_{j}-1}}, v_{j_{s_{j}}}\}\}$. Finally, define the set-pairing $(S(\phi), \mathcal{P}(\phi))$ by 
$$
S(\phi) := \bigcup_{j=1}^{c} S_{j} \quad \text{ and } \quad \mathcal{P}(\phi):= \bigcup_{j=1}^{c} \mathcal{P}_{j}.
$$
Note that 
$$
|S(\phi)| = \left| \bigcup_{j=1}^{c}S_{j} \right| =\sum_{j=1}^{c} |S_j| \geq \sum_{j=1}^{c}(|\phi^{-1}(j)|-1) = n - c \geq \frac{510n}{\sqrt{\omega}}.
$$

If $\phi$ is a linear colouring of $\Gnp$, then $(S(\phi), \mathcal{P}(\phi))$ cannot contain any nonempty sub-pairing $(S'(\phi), \mathcal{P}'(\phi))$ with a Hamilton path. But, by Theorem~\ref{cores_hamiltonian} and the fact that $|S(\phi)| \geq 510n / \sqrt{\omega}$, a.a.s.\ $(S(\phi), \mathcal{P}(\phi))$ contains such a sub-pairing, regardless which colouring $\phi$ with at most $n - 510n / \sqrt{\omega}$ colour classes is considered. Thus, we conclude that a.a.s.\ no linear colouring of $\Gnp$ with at most $n - 510n / \sqrt{\omega}$ colour classes exists, and hence a.a.s.
$$
\chi_{\text{lin}}(\Gnp) > n - \frac{510n}{\sqrt{\omega}},
$$
which finishes the proof of Theorem~\ref{main_theorem}.
\end{proof}

It remains to prove Theorem~\ref{cores_hamiltonian}. To that end, we will use the rotation-extension technique of P\'{osa}~\cite{posa1976hamiltonian}. This procedure requires a two-round exposure of the edges of $\Gnp$. That is, to generate the random graph for a given $p$, we choose two values $0 \leq p_{1}, p_{2} \leq p$ such that $p = p_{1} + p_{2} - p_{1}p_{2}$, then generate independent random graphs $\mathcal{G}(n, p_{1})$ and $\mathcal{G}(n, p_{2})$. It is easy to see that the graph obtained by taking the union of $\mathcal{G}(n,p_{1})$ and $\mathcal{G}(n,p_{2})$, and collapsing any double edges into single edges is distributed as $\Gnp$. In our case, $p = \omega/n$, and we can take $p_{1} = \frac{\omega}{2n}$ and $p_{2} = \frac{\omega}{2n} + \epsilon \ge \frac{\omega}{2n}$, where $\epsilon = O((\omega/n)^{2})$.

Rather than giving a full explanation of the technique here, we refer instead to the treatment in~\cite[Chapter 6]{frieze2016introduction}. The crucial lemma is the following, which is a straightforward consequence of~\cite[Corollary 2.10]{krivelevich2010hamiltonicity}:

\begin{lemma}\label{posa_lemma} 
Let $r$ be a positive integer, and let $G = (V,E)$ be a connected graph in which every subset $X \subseteq V$ of size $|X| \leq r$ satisfies $|N(X) \setminus X| > 2|X|.$ Suppose that the longest path in $G$ has $h \leq |V|-2$ edges. Then there are at least $r^{2}/2$ non-edges of $G$ such that the addition of any one of them results in a graph $G'$ whose longest path has at least $h+1$ edges.
\end{lemma}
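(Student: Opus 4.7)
My plan is to prove the lemma via P\'{o}sa's classical rotation--extension technique, carried out twice.

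First, I would fix a longest path $P = a v_1 v_2 \cdots v_h$ in $G$ and hold the endpoint $a$ fixed while performing rotations at the other end: if the current free endpoint $u$ has an edge to some non-neighbouring $v_i$ on the path, then rebuilding the path in the standard way makes $v_{i\pm 1}$ the new free endpoint. Let $END(a)$ denote the set of all vertices obtainable as the free endpoint of a longest path reachable from $P$ by some finite sequence of such rotations. The standard P\'{o}sa lemma (see e.g.~\cite[Chapter~6]{frieze2016introduction}) says that the expansion hypothesis $|N(X) \setminus X| > 2|X|$ for $|X| \le r$ forces $|END(a)| > 2r$; otherwise the ``rotation-predecessors'' of $END(a)$ on the relevant paths cannot accommodate $N(END(a))$ inside $V(P)$, contradicting the expansion assumption.

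Next, I would perform a second round of rotations. For each $b' \in END(a)$, fix a witness longest path $P_{b'}$ from $a$ to $b'$ (whose vertex set coincides with $V(P)$), and now rotate on $P_{b'}$ with $b'$ held fixed, obtaining a set $END^*(b')$ of alternative first endpoints. The same P\'{o}sa argument yields $|END^*(b')| > 2r$.

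The key step is then to show that for every pair $(b', c)$ with $b' \in END(a)$ and $c \in END^*(b')$, the pair $\{b',c\}$ is a non-edge of $G$ whose addition increases the length of a longest path. By construction there is a longest path $P_{b',c}$ of length $h$ from $c$ to $b'$ with $V(P_{b',c}) = V(P)$. If $\{b',c\}$ were already an edge of $G$, then $P_{b',c}$ together with $\{b',c\}$ would be a cycle $C$ on $V(P)$ of length $h+1$; since $h \le |V|-2$, there exists $w \in V \setminus V(P)$, and by connectedness of $G$ some edge $\{x,w'\}$ of $G$ has $x \in V(C)$ and $w' \notin V(C)$. Deleting one of the two cycle-edges at $x$ and then appending $w'$ via $\{x,w'\}$ would produce a path of length $h+1$ in $G$ itself, contradicting maximality of $P$. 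Hence $\{b',c\}$ must be a non-edge, and the identical cycle-opening argument carried out inside $G+\{b',c\}$ then yields a path of length $h+1$ in $G+\{b',c\}$, as required.

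Finally, I would count. The number of ordered pairs $(b',c)$ exceeds $(2r)(2r) = 4r^2$, and each unordered non-edge $\{u,v\}$ is counted at most twice (as $(u,v)$ and possibly as $(v,u)$, each requiring the first coordinate to lie in $END(a)$), giving at least $2r^2 \ge r^2/2$ distinct booster non-edges. The main obstacle is the first step---establishing $|END(a)| > 2r$ from the expansion hypothesis---which is the delicate heart of P\'{o}sa's original argument, requiring one to track how neighbours of rotation endpoints are constrained to lie among endpoints or their path-predecessors and to invoke the strict expansion inequality; the iterated rotation, the cycle-opening observation, and the double-counting all follow essentially mechanically once that core lemma is in hand.
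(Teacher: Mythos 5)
The paper itself gives no proof of this lemma --- it is invoked as ``a straightforward consequence of Corollary~2.10 of~\cite{krivelevich2010hamiltonicity}'' --- so what you have written is essentially a reconstruction of the standard proof of that cited booster lemma: a first round of P\'osa rotations with one endpoint fixed, a second round rotating each witness path from the other end, the connectivity/cycle-opening step showing each resulting pair $\{b',c\}$ is a non-edge whose addition lengthens the longest path (here your use of $h\le |V|-2$ and connectedness is exactly right), and a double count of ordered pairs. Structurally this is the correct and intended argument.

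The one genuine slip is quantitative: the hypothesis ``$|N(X)\setminus X|>2|X|$ for all $|X|\le r$'' does \emph{not} give $|END(a)|>2r$. P\'osa's rotation lemma bounds the neighbourhood of $U=END(a)$ by the path-successors and path-predecessors of $U$ along the original path, so $|N(U)\setminus U|\le 2|U|-1$; if $|U|\le r$ the expansion hypothesis would force $|N(U)\setminus U|>2|U|$, a contradiction, and all one may conclude is $|U|\ge r+1$. (To conclude $|END(a)|>2r$ you would need the expansion condition for sets of size up to $2r$.) The same correction applies to each $END^{*}(b')$. Fortunately this does not damage the result: with $|END(a)|\ge r+1$ and $|END^{*}(b')|\ge r+1$ you get at least $(r+1)^{2}$ ordered pairs, each unordered non-edge counted at most twice, hence at least $(r+1)^{2}/2\ge r^{2}/2$ boosters --- which is exactly the bound claimed in the lemma (and is precisely the form in which Krivelevich's Corollary~2.10 is stated). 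So your proof is sound once $2r$ is replaced by $r+1$ throughout and the final arithmetic is adjusted accordingly.
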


In the light of the above lemma, we will call a graph $H$ \textit{good} if $H$ is connected and satisfies $|N(X) \setminus X| > 2|X|$ for every $X \subseteq V(H)$ with $|X| \leq |V(H)| / 45$. 

\medskip

We will use the following observation.

\begin{lemma}\label{sprinkling_lemma}
Let $G_{1}$ be any simple graph on vertex set $[n]$. Sample $\mathcal{G}(n,p_{2})$ and consider $G = G_{1} \cup \mathcal{G}(n,p_{2})$, collapsing double edges if needed. 
Then, the following property holds a.a.s.: for all subsets $S \subseteq[n]$ with $|S| \geq 255n / \sqrt{\omega}$ such that $G_{1}[S]$ is good, $G[S]$ contains a Hamilton path. 
\end{lemma}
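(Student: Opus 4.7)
The plan is to apply the rotation-extension technique of P\'{o}sa (Lemma~\ref{posa_lemma}) via a multi-round exposure of $\mathcal{G}(n,p_2)$, and then union bound over the relevant subsets $S$.

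Fix an arbitrary $S \subseteq [n]$ with $|S| = s \geq 255 n/\sqrt{\omega}$ and $G_1[S]$ good. Decompose the restriction $\mathcal{G}(n,p_2)[S]$ as $R_1 \cup \cdots \cup R_T$ with $R_i \sim \mathcal{G}(n,p')$ i.i.d., where $p' = 1-(1-p_2)^{1/T} \geq p_2/T$ by Bernoulli's inequality. Setting $G_0 = G_1[S]$ and $G_i = G_{i-1}\cup R_i[S]$, every $G_{i-1}$ contains $G_1[S]$, so it remains connected and satisfies $|N(X)\setminus X| > 2|X|$ for $|X| \leq s/45$ (both properties are monotone under edge addition). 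Whenever $G_{i-1}$ has no Hamilton path, Lemma~\ref{posa_lemma} applied with $r = \lfloor s/45 \rfloor$ produces at least $r^2/2$ specific booster non-edges, and $R_i$ misses all of them with probability at most $\exp(-p'r^2/2)$.

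Since each successful round extends the longest path by at least one, $T \leq s$ rounds suffice to reach a Hamilton path. Taking $T = s$ and using $r = \Theta(s)$, $p' \geq p_2/T = \Theta(\omega/(ns))$, and $s \geq 255 n/\sqrt{\omega}$ yields $p'r^2/2 \geq c\sqrt{\omega}$ for an absolute constant $c > 0$, so a union bound over the $T$ rounds gives per-$S$ failure at most $s \exp(-c\sqrt{\omega})$.

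Finally, the union bound over $S$: this is the main hurdle I anticipate. With $\binom{n}{s} \leq (e\sqrt{\omega}/255)^s$ for $s \geq 255 n/\sqrt{\omega}$, the total failure is at most
\[
\sum_{s = \lceil 255n/\sqrt{\omega} \rceil}^n \binom{n}{s} \cdot s \cdot \exp(-c\sqrt{\omega}),
\]
and the entropy $\binom{n}{s}$ contributes $\exp(\Theta(s\log\omega))$, which the exponent $c\sqrt{\omega}$ must absorb uniformly in $s$. The constant $255$ (and the downstream $510$) in the statement is chosen precisely so that this balance holds, arising from the interplay between the expansion threshold $45$, the factor $2$ in the P\'{o}sa condition, and the $\log 2$ entropy loss; the calculation should close following the blueprint of the analogous sprinkling argument in~\cite{krivelevich2014cores}.
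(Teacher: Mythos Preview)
Your argument has a genuine gap at exactly the point you flag as ``the main hurdle.'' With $T=s$ rounds and $p'\ge p_2/s$, the per-round miss probability is at most $\exp(-p'r^2/2)\le\exp(-p_2 s/(2\cdot 45^2))$, which for $s\ge 255n/\sqrt{\omega}$ and $p_2\ge\omega/(2n)$ gives only $\exp(-c\sqrt{\omega})$ with $c\approx 255/8100$. Your per-$S$ failure bound is therefore $s\exp(-c\sqrt{\omega})$. But the union bound must cover up to $2^n$ subsets $S$ (indeed $\binom{n}{s}$ alone is of order $\exp(\Theta(s\log\omega))$ with $s$ ranging up to $n$), and no choice of the constant $255$ can make $c\sqrt{\omega}$ absorb $\Theta(s\log\omega)$: already at the smallest scale $s=\Theta(n/\sqrt{\omega})$ one has $s\log\omega=\Theta((n/\sqrt{\omega})\log\omega)\gg\sqrt{\omega}$ whenever $\omega=o(n)$. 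So the final display in your proposal does not tend to $0$, and the claim that ``the constant $255$ is chosen precisely so that this balance holds'' is incorrect.

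The paper avoids this loss by exposing the edges of $\mathcal{G}(n,p_2)$ inside $S$ one at a time rather than in $s$ batches. First, Chernoff gives $r\ge |S|^2p_2/4$ such edges with probability $1-o(2^{-n})$. Then each newly revealed edge is a booster with conditional probability at least $(|S|^2/4050)/\binom{|S|}{2}\ge 1/2025$, so the number of path extensions stochastically dominates $\mathrm{Bin}(r,1/2025)$, whose mean exceeds $4n$ by the choice of $255$. Chernoff now gives failure probability $o(2^{-n})$ for a fixed $S$, which does survive the union bound over $2^n$ sets. Your batching into $T=s$ rounds with a union bound requiring \emph{every} round to succeed throws away this $\Theta(n)$-mean concentration; you could repair your scheme by taking $T\gg s$ rounds and applying Chernoff to the number of successful rounds, but as written the argument does not close.
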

\begin{proof}
Consider a set $S \subseteq [n]$ of size $|S| \geq 255n / \sqrt{\omega}$ that induces a good subgraph $H = G_{1}[S]$. Using Lemma~\ref{posa_lemma}, we will greedily build a Hamilton path on the vertices in $S$ as the edges of $\mathcal{G}(n,p_{2})$ are exposed one-by-one. For a graph $G$, define $\lambda(G)$ to be the number of edges in a longest path in $G$. 

Let $\{e_{1},e_{2},\dots,e_{r}\}$ be the edges in $\mathcal{G}(n,p_{2})$ which join pairs of vertices in $S$, listed in a random order. Note that $r$ is distributed as $\text{Bin}\left(\binom{|S|}{2}, p_{2} \right)$, which has mean asymptotic to $|S|^{2}p_{2}/2 \geq 255^{2}n / 4.$ Using Chernoff's bound (\ref{chern}), it is easy to show that $r \geq |S|^{2}p_{2} / 4$ with probability $1 - o(2^{-n})$. We condition on this outcome, and henceforth assume $r \geq |S|^{2}p_{2}/4$. Note that we only exposed the number of edges in $\mathcal{G}(n,p_{2})$ that fall into the set $S$; the locations of these edges are still unexposed.

For $1 \leq j \leq r$, inductively define $H_{j} = H_{j-1} \cup \{e_{j}\}$, where we take $H_{0} = H$. Since we assume $H$ is good, and adding edges to a good graph preserves the property of being good, $H_{j}$ is good for all $j$.

Now, fix $j \geq 0$ and condition on the outcome of $H_{j}$ (there is no conditioning necessary for $j=0$, when we simply have $H_{0} = H$). Suppose that $\lambda(H_{j}) < |S|-1$, that is, $H_{j}$ does not have a Hamilton path. By Lemma~\ref{posa_lemma}, there exists a set $B_{j}$ of at least $\frac{|S|^{2}}{2\cdot(45)^{2}} =  \frac{|S|^{2}}{4050}$ non-edges of $H_{j}$ such that if $e_{j+1} \in B_{j}$, then we have $\lambda(H_{j+1}) \geq \lambda(H_{j}) +  1$. The edge $e_{j+1}$ is uniformly distributed over pairs of vertices in $S$ which are not in the set $\{e_{1},e_{2},\dots,e_{j}\}$. Crudely, there are at most $\binom{|S|}{2}$ such pairs. Since none of the pairs in $B_{j}$ are in $\{e_{1},e_{2},\dots,e_{j}\}$ by definition, we therefore have
$$
\prob(e_{j+1} \in B_{j}\, | \, e_{1},e_{2},\dots,e_{j}) \geq \binom{|S|}{2}^{-1} \frac{|S|^{2}}{4050} \geq \frac{1}{2025}\,,
$$
given that $\lambda(H_{j}) < |S|-1$. Clearly, if $\lambda(H_{j}) = |S|-1$, then $\lambda(H_{j+1}) = |S|-1$ as well. So for any $0 \leq j \leq r-1$, either $H_{j}$ has a Hamilton path, or the length of a longest path increases by at least $1$ from $H_{j}$ to $H_{j+1}$ with probability at least $1/2025$, independently of the history up to time $j$. Thus, for as long as  $H_{j}$ has no Hamilton path, $\lambda(H_{j})$ stochastically dominates a Binomial random variable with mean $j/2025.$ In particular,
\begin{eqnarray}
\prob(H_{r} \text{ has no Hamilton path}) &\leq& \prob(\lambda(H_{r}) < |S|-1) \nonumber \\
&\leq& \prob\left(\text{Bin}\left(r, \frac{1}{2025}\right) < |S|-1\right). \label{binomial_bound}
\end{eqnarray}
Conditioned on $r \geq |S|^{2}p_{2}/4$, the $\text{Bin}(r, \frac{1}{2025})$ random variable has mean
$$
\frac{r}{2025} \geq \frac{1}{2025}\frac{|S|^{2}p_{2}}{4} \geq \frac{1}{16200}\left(\frac{255n}{\sqrt{\omega}}\right)^{2}\frac{\omega}{n} > 4n,
$$
where in the last inequality we use the fact that $255 > 2\sqrt{16200} \approx 254.558.$ By Chernoff's bound (\ref{chern}) with $t = \frac{r}{2025} - n > \frac{3}{4}\cdot\frac{r}{2025}$, 
\begin{eqnarray*}
\prob\left(\text{Bin}\left(r, \frac{1}{2025}\right) < |S|-1\right) &\leq& \prob\left(\text{Bin}\left(r, \frac{1}{2025}\right) < n\right)\\
&\leq& \exp\left\{-\frac{9}{32}\cdot\frac{r}{2025} \right\}\\
&\leq& \exp\left\{-\frac{9n}{8} \right\}\\
&=& o(2^{-n}),
\end{eqnarray*}
and so $\prob(H_{r} \text{ has no Hamilton path}) = o(2^{-n})$ as well by~(\ref{binomial_bound}).

\medskip

In summary, we have shown that, conditioned on $r \geq |S|^{2}p_{2} / 4$, the subgraph $H_{r} = G[S]$ contains a Hamilton path with probability $1 - o(2^{-n})$. Since $r \geq |S|^{2}p_{2} / 4$ also with probability $1-o(2^{-n}),$ it follows that $\prob(G[S] \text{ has no Hamilton path}) = o(2^{-n}).$ A union bound over the at most $2^{n}$ choices for the set $S$ completes the proof of the lemma.
\end{proof}

Now, we can finish the proof of Theorem~\ref{cores_hamiltonian}.
 
\begin{proof}[Proof of Theorem~\ref{cores_hamiltonian}]
By Lemma~\ref{k_core}, applied to $\mathcal{G}(n,p_{1})$ with $p_1 = p/2 = (\omega / 2)/n$, a.a.s., for every set pairing $(S, \mathcal{P})$ with 
$$
|S| \geq \frac{(255\sqrt{2})n}{\sqrt{\omega/2}} = \frac{510n}{\sqrt{\omega}},
$$ 
there exists a set-pairing $(S', \mathcal{P}') \subseteq (S, \mathcal{P})$ such that $|S'| \geq |S|/2 \ge 255n/\sqrt{\omega}$ and the subgraph $G[S']$ has minimum degree at least $|S|p_1 / 3  \ge |S'|p_1 / 3$.  
By Lemma~\ref{expansion_connectivity}, applied again to $\mathcal{G}(n,p_{1})$, a.a.s.\ every induced subgraph $H$ of $\mathcal{G}(n,p_{1})$ on at least 
$$
\frac{(255/\sqrt{2})n}{\sqrt{\omega/2}} = \frac{255n}{\sqrt{\omega}}
$$
vertices with $\delta(H) \geq |V(H)|p_{1} / 3$ is good.
	
Combining the two above observations together, we establish that a.a.s.\ in $\mathcal{G}(n,p_{1})$, for every set-pairing $(S, \mathcal{P})$ with $|S| \geq 510n / \sqrt{\omega}$, there exists an induced subgraph that is good and has at least $255n / \sqrt{\omega}$ vertices. Then, by Lemma~\ref{sprinkling_lemma}, a.a.s.\ each of these subgraphs becomes Hamiltonian after adding the edges from $\mathcal{G}(n,p_{2})$. This finishes the proof of the theorem.
\end{proof}

\section{Sparse Case: $np \le 1$ (Proof of Theorem~\ref{sparse_linear})}\label{sec:proof_sparse}

In this section, we give some results about $\chi_{\text{lin}}(\Gnp)$ in the regime $p = c/n$, $c \leq 1$. These results are implied directly by the arguments of Perarnau and Serra from~\cite{perarnau2014tree}, where the centred chromatic number of $\Gnp$ is studied under the name of \textit{tree-depth}. The relevant result therein is the following.

\begin{theorem}[\cite{perarnau2014tree} Theorem 1.2]\label{sparse_centered}
    The following hold a.a.s.:
    $$
    \chi_{\text{cen}}(\mathcal{G}(n,c/n)) = 
    \begin{cases} 
    \Theta(\log \log n) \quad& c \in (0, 1) \\ 
    \Theta(\log n) \quad& c = 1. 
    \end{cases}
    $$
\end{theorem}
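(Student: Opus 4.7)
The plan is to derive both halves of Theorem~\ref{sparse_linear} by piggybacking on the Perarnau--Serra analysis of $\chi_{\text{cen}}$, using two simple structural facts. First, $\chi_{\text{lin}}(G) \leq \chi_{\text{cen}}(G)$ holds for every graph $G$, so any centred colouring is automatically linear; in particular, the explicit constructions of~\cite{perarnau2014tree} that certify $\chi_{\text{cen}}(\mathcal{G}(n,c/n)) = O(\log \log n)$ for $c \in (0,1)$ and $O(\log n)$ for $c = 1$ deliver the matching upper bounds for $\chi_{\text{lin}}$ with no modification. Second, on a path $P_k$ every connected subgraph is itself a path, so the linear and centred colouring conditions coincide on $P_k$, and the classical recursion gives $\chi_{\text{lin}}(P_k) = \chi_{\text{cen}}(P_k) = \lceil \log_2(k+1) \rceil$. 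Consequently, if $G$ contains a (not necessarily induced) path on $k$ vertices, then the restriction of any linear colouring of $G$ to that path is itself linear, giving
$$
\chi_{\text{lin}}(G) \geq \lceil \log_2(k+1) \rceil.
$$

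With this reduction in hand, the lower bounds reduce to exhibiting long paths in $\mathcal{G}(n,c/n)$. For $c \in (0,1)$, the expected number of paths on $k$ vertices is, up to lower order factors, $\tfrac{1}{2} n c^{k-1}$, which still tends to infinity for $k = (1 - o(1)) \frac{\log n}{\log(1/c)}$; a routine second-moment argument (subcritical components are small and well-separated, so that distinct candidate paths typically share no edges) then shows the existence of such a path a.a.s., yielding $\chi_{\text{lin}}(\mathcal{G}(n,c/n)) = \Omega(\log \log n)$. For $c = 1$, the critical random graph has a largest component of size $\Theta(n^{2/3})$ which a.a.s.\ contains a path of length $n^{\Omega(1)}$; this is precisely the kind of estimate Perarnau and Serra already use to bound $\chi_{\text{cen}}$ from below (their argument identifies a long path and invokes $\chi_{\text{cen}}(P_k) = \lceil \log_2(k+1) \rceil$), so it transfers to $\chi_{\text{lin}}$ verbatim. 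Taking $\log_2$ gives $\chi_{\text{lin}}(\mathcal{G}(n,1/n)) = \Omega(\log n)$.

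The main (and really only) technical point is the second-moment calculation for paths of length close to $\log n / \log(1/c)$ in the subcritical case, together with the polynomial-length path estimate in the critical case; both are standard in the random graph literature and are effectively already present in~\cite{perarnau2014tree}. Everything else is the short observation that a linear colouring remains linear upon restriction to a path subgraph, combined with $\chi_{\text{lin}}(P_k) = \lceil \log_2(k+1) \rceil$, which is the only place the definition of \emph{linear} colouring (as opposed to \emph{centred} colouring) is actually used.
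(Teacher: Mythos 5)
The statement you were asked to prove is Theorem~\ref{sparse_centered}, the Perarnau--Serra estimate for the \emph{centred} chromatic number $\chi_{\text{cen}}(\mathcal{G}(n,c/n))$. The paper does not prove this result; it is quoted from \cite{perarnau2014tree}. Your proposal does not prove it either: what you actually argue is Theorem~\ref{sparse_linear}, the analogue for $\chi_{\text{lin}}$, and for its upper bounds you explicitly invoke the Perarnau--Serra constructions --- that is, you use the statement under review as a black box. With respect to Theorem~\ref{sparse_centered} this is circular in the upper-bound direction: nothing in your argument bounds $\chi_{\text{cen}}$ from above, which would require showing that every component admits a centred colouring with $O(\log\log n)$ (resp.\ $O(\log n)$) colours, e.g.\ by exploiting that subcritical and critical components are small and nearly tree-like; you never touch this. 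The lower-bound half of the stated theorem does follow from your path argument, since $\chi_{\text{cen}}(G) \geq \chi_{\text{lin}}(G) \geq \lfloor \log_2 k \rfloor + 1$ whenever $G$ contains a path on $k$ vertices, but the harder (upper-bound) direction is assumed rather than proved. So, as a proof of the theorem in question, there is a genuine gap.

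Read instead as a proof of Theorem~\ref{sparse_linear} --- which is clearly what you intended --- your argument is essentially the paper's own: upper bounds from $\chi_{\text{lin}} \leq \chi_{\text{cen}}$ together with Theorem~\ref{sparse_centered}; lower bounds from monotonicity under taking subgraphs, the identity $\chi_{\text{lin}}(P_k) = \chi_{\text{cen}}(P_k) = \lfloor \log_2 k \rfloor + 1$ (your $\lceil \log_2(k+1)\rceil$ is the same quantity), and the existence of long paths. The only real divergence is how the long paths are produced: you propose a second-moment computation giving a path on $(1-o(1))\log n/\log(1/c)$ vertices in the subcritical case and a path of length $n^{\Omega(1)}$ at criticality, whereas the paper simply cites known diameter results (components of diameter of order $\log n$ for $c<1$ \cite{luczak1998random}, diameter of order $n^{1/3}$ for the largest critical component \cite{nachmias2008critical}) and applies its bound $\chi_{\text{lin}}(G) \geq \log_2(\mathrm{diam}(C))$. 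Your second-moment sketch is plausible but only asserted, and since merely the order of magnitude of the logarithm of the path length matters, citing the diameter results as the paper does is both shorter and safer.
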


Since $\chi_{\text{lin}}(G) \leq \chi_{\text{cen}}(G)$ for any graph $G$, Theorem~\ref{sparse_centered} is already enough to conclude that $\chi_{\text{lin}}(\mathcal{G}(n,c/n))$ is $O(\log \log n)$ a.a.s.\ when $c < 1$, and $O(\log n)$ a.a.s.\ when $c = 1.$ As we will see, the techniques used in \cite{perarnau2014tree} to prove the lower bounds in Theorem~\ref{sparse_centered} apply equally well to linear colourings, thus allowing us to deduce Theorem~\ref{sparse_linear}.

\medskip

Let us make a few observations. First, for any graph $G$, since any linear colouring of $G$ is necessarily a linear colouring of every subgraph of $G$, we have
\begin{equation}\label{max_subgraph}
\chi_{\text{lin}}(G) \geq \max_{H \subseteq G}\chi_{\text{lin}}(H),
\end{equation}
where the maximum is taken over all subgraphs $H$ of $G$. Next, observe that any connected subgraph of the path on $k$ vertices $P_{k}$ is necessarily a path, and hence linear and the centred colourings are equivalent on a path: 
\begin{equation}\label{eq:path}
\chi_{\text{cen}}(P_{k}) = \chi_{\text{lin}}(P_{k}). 
\end{equation}
Finally, it is well known, and easy to show, that 
\begin{equation}\label{path_colourings}
\chi_{\text{cen}}(P_{k}) = \lfloor \log_{2} k \rfloor + 1.
\end{equation}

Together, (\ref{max_subgraph}), (\ref{eq:path}), and~(\ref{path_colourings}) imply that for any graph $G$ and any component $C$ of $G$, we have
\begin{equation}\label{diameter_bound}
\chi_{\text{lin}}(G) \geq \log_{2}(\text{diam}(C)).
\end{equation}

Observation~(\ref{diameter_bound}) is all we need to prove Theorem~\ref{sparse_linear}.

\begin{proof}[Proof of Theorem~\ref{sparse_linear}]
Recall that we only need to show the lower bounds on $\chi_{\text{lin}}(\mathcal{G}(n,c/n))$; the upper bounds are implied by Theorem~\ref{sparse_centered}. 

For $c < 1$, the diameter of the largest component in $\mathcal{G}(n,c/n)$ is typically of order $\sqrt{\log n}$, but there are components of smaller cardinality with diameter of order $\log n$ (see~\cite{luczak1998random}). We conclude that a.a.s.\ $\chi_{\text{lin}}(\mathcal{G}(n,c/n)) = \Omega(\log\log n)$ by~(\ref{diameter_bound}). 

Similarly, for $c=1$, the diameter of the largest component in $\mathcal{G}(n,1/n)$ is known to be typically of order $n^{1/3}$ (see~\cite{nachmias2008critical}) implying that a.a.s.\ $\chi_{\text{lin}}(\mathcal{G}(n,c/n)) = \Omega(\log n)$.
\end{proof}

Let us mention that~\cite{perarnau2014tree} provides weaker lower bounds for the diameter of $\mathcal{G}(n,c/n)$ when $c \le 1$ but strong enough to give the same (up to a multiplicative constant) lower bounds for $\chi_{\text{lin}}(\mathcal{G}(n,c/n))$. Moreover, for $c > 1$, a.a.s.\ $\mathcal{G}(n,c/n)$ contains a path of length $\Omega(n)$ (see, for example, \cite{ajtai1981longest}) and so a.a.s.\ $\chi_{\text{lin}}(\mathcal{G}(n,c/n)) = \Omega(\log n)$. In fact, the non-existence of linear colouring is clearly a monotonic property so the same bound is implied by the fact that a.a.s.\ $\chi_{\text{lin}}(\mathcal{G}(n,1/n)) = \Omega(\log n)$. 

\bibliography{linear_colorings_refs} 

\end{document}